\newtheorem{theorem}{Theorem}
\newtheorem{proposition}[theorem]{Proposition}
\newtheorem{definition}[theorem]{Definition}
\newcommand{\curlyX}{\mathcal{X}}
\newcommand{\curlyY}{\mathcal{Y}}
\newcommand{\ldef}{\mathrel{:=}}
\newcommand{\boost}{\vphantom{\big)}}
\newcommand{\qm}{\buildrel \rm q.m. \over \longrightarrow}
\newcommand{\rd}{\textrm{d}}
\newcommand{\ip}[1]{\left\langle {#1} \right\rangle}
\newcommand{\norm}[1]{\lVert {#1} \rVert}
\newcommand{\Order}{\textit{O}}
\newcommand{\E}{\mathop{\textsf{E}}\nolimits}
\newcommand{\Var}{\mathop{\textsf{var}}\nolimits}
\newcommand{\Sd}{\mathop{\textsf{sd}}\nolimits}
\newcommand{\Cov}{\mathop{\textsf{cov}}\nolimits}
\newcommand{\Corr}{\mathop{\textsf{corr}}\nolimits}
\newcommand{\given}{\mathbin{\vert}\nolinebreak}
\title{A representation theorem for stochastic processes with
  separable covariance functions, and its implications for emulation}
\author{Jonathan Rougier\thanks{\sloppy{}Department of Mathematics, University
    Walk, Bristol BS8 1TW, UK.
    Email \texttt{j.c.rougier@bristol.ac.uk}.}}
\date{Originally written Oct 2012}
\begin{document}

\maketitle

\begin{abstract}\noindent
Many applications require stochastic processes specified on two- or higher-dimensional domains; spatial or spatial-temporal modelling, for example.  In these applications it is attractive, for conceptual simplicity and computational tractability, to propose a covariance function that is \emph{separable}; e.g.\ the product of a covariance function in space and one in time.  This paper presents a representation theorem for such a proposal, and shows that all processes with continuous separable covariance functions are second-order identical to the product of second-order uncorrelated processes. It discusses the implications of separable or nearly separable prior covariances for the statistical emulation of complicated functions such as computer codes, and critically reexamines the conventional wisdom concerning emulator structure, and size of design.

  \textsc{Keywords}:
  Stochastic process, spatial-temporal modelling, $k$th-order
  uncorrelated families, computer experiment, emulator

\end{abstract}

\section{Introduction}
\label{sec:intro}

Many statistical applications require covariance functions expressed
over two or more dimensions.  Spatial and spatial-temporal modelling
are obvious applications, where the number of dimensions is typically
two or three.  Emulating deterministic functions with Gaussian
processes, part of the general field of computer experiments
\citep[see, e.g.,][]{santner03}, will often require five or ten
dimensions---sometimes more.

Covariance functions, being non-negative definite symmetric, are
highly structured, and one does not hit upon them by chance.  In some
cases, domain symmetry can be used to simplify the problem.  Thus, if
two of the dimensions are spatial and there is no preferential
direction, then an isotropic covariance function can be defined on the
basis of distance alone, reducing a two-dimensional problem to a
one-dimensional one.  This device is not available in computer
experiments, where each dimension represents an input to the code, and
there is no reason why two different inputs should even have the same
units, let alone have a symmetric effect on the code output.

Therefore it is often attractive to take advantage of the general
result that a $p$-dimensional covariance function can be built up as
the product of $p$ one-dimensional covariance functions.  This product
form is termed a `separable covariance function'.  At the very least,
all such covariance functions satisfy the necessary conditions of
being non-negative definite symmetric.  There are other advantages of
this approach, discussed in section~\ref{sec:separable}.
Section~\ref{sec:separable} also presents the restrictions on the
conditional and marginal correlation functions which follow directly
from the separability of the covariance function.

Of more general interest is whether a separable covariance function
provides any restrictions on the underlying stochastic process itself.
Section~\ref{sec:result} provides a complete answer to this question,
giving a representation theorem on the underlying process which holds
if and only if the covariance function is separable.  There is also a
close relationship between separable covariance functions and a
product form for the underlying process, e.g.\ the situation in which
$F(x, y)$ might be written as $F_x(x) \times F_y(y)$.  It is
well-known that if $F_x$ and $F_y$ are probabilistically independent,
then $F$ has a separable covariance function.
Section~\ref{sec:products} provides a converse result, in terms of
second-order properties.  This allows us to `explain' the restrictions
of the conditional and marginal correlations in terms of the product
form for $F$.

The main implications of these results are for the emulation of
complex computer codes, discussed in section~\ref{sec:emulators}.
Here it is completely standard to use separable covariance functions
as a large component of the emulator, and, indeed, the conventional
wisdom is that the entire emulator may be constructed in this fashion.
This advice is critically analysed using the representation theorem,
allowing us to identifying why it might perform well in many
applications, and when it breaks down.  Finally,
section~\ref{sec:summary} concludes with a brief summary.

\section{Separable covariance functions}
\label{sec:separable}

Consider a real-valued stochastic process $F$ with domain $\curlyX
\times \curlyY$.  The covariance function of $F$ is denoted
\begin{equation}
  \label{eq:kappa}
  \kappa \{ (x, y), (x', y') \} \ldef \Cov \{ F(x, y), F(x', y') \} .
\end{equation}
If $F$ has a separable covariance function then $\kappa$ factorises into
the product of a function in $(x, x')$ and a function in $(y, y')$,
denoted
\begin{equation}
  \label{eq:kappasep}
  \kappa\{ (x, y), (x', y') \} = \kappa_x(x, x') \, \kappa_y(y, y') .
\end{equation}
For clarity, in this paper this is stated as ``the covariance function
is separable'', noting that separability as used here should not be
confused with the property of separability of metric spaces
\citep[see, e.g.,][chapter~1]{kreyszig78} and the related property of
separability of stochastic processes \citep[see,
e.g.,][sec.~35]{loeve60}.  Nor should it be confused with the notion
of separability used in \citet{genton04}, which considers the case
where $\kappa_x(x, x')$ can be written as $r_1(x) r_2(x')$.

There are two principal advantages when the covariance function is
separable.  First, it can be hard to specify a non-negative definite
function jointly over a two- or higher-dimensional domain, and it is
very useful that such functions can be built up as products of simpler
functions.  This is particularly true in the case where the covariance
function contains parameters that need to be estimated, because in
this case the parameters separate cleanly into $x$-parameters and
$y$-parameters.  This is the motivation for using separable covariance
functions for emulating complex computer codes, as discussed in more
detail in section~\ref{sec:emulators}.  Note that separability of the
covariance function is not preserved under rotation; it insists on a
preferential set of directions in the input space, aligned with the
axes.  The exception is the squared exponential correlation function
with a common correlation length, i.e.\ (for the stationary case)
\begin{equation}
  \label{eq:3}
  \begin{split}
    \kappa \{ (x, y), (x', y') \}
    & = \sigma^2 \exp \{ -\theta^2 (x - x')^2\} \,\exp\{ -\theta^2 (y - y')^2\} \\
    & = \sigma^2 \exp \{ -\theta^2 h[ (x, y), (x', y')]^2 \}
  \end{split}
\end{equation}
where $h[ \cdot, \cdot]$ denotes Euclidean distance.  This is a very
popular choice in computer experiments, originating in the papers of
\citet{sacks89} and \citet{currin91}, although different correlation
lengths are used in each direction.

Second, in situations where the process $F$ is observed on a grid, the
variance matrix of the observations has Kronecker product form, and
hence is much more easily inverted.  If the grid has $m \times n$
points, then this converts an $\Order\big( (m + n)^3 \big)$
calculation into a $\Order(m^3) + \Order(n^3)$ calculation.  This
result is widely used in space-time kriging.  \citet{genton07}, for
example, presents a method for finding separable approximations to
space-time variance matrices, while \citet{li07} present a
non-parametric test for separability \citep[see also][]{li08}.
\citet{gneiting07} review general approaches to modelling
spatial-temporal processes, including an example of fitting a
separable covariance function and a comparison with other structured
approaches.

However, there is a price for these benefits: separability of the
covariance function is a strong constraint on the nature of $F$.  In
the supporting material for \citet{koh01}, \citet{ohagan98a} considers
the implication of a separable covariance function for the conditional
covariance of a Gaussian process, namely that
\begin{equation}
  \label{eq:ohagan}
  \Cov \{ F(x, y), F(x', y') \given F(x', y) \} = 0 
\end{equation}
(see Figure~\ref{fig:Cov}).  \citeauthor{ohagan98a} is able to provide
a representation theorem for the covariance function of processes
having this type of conditional covariance structure.  This is related
to the separability of the covariance function of a transformed
process.  A similar result to \eqref{eq:ohagan} holds in the more
general Bayes linear case, where conditioning is replaced by
projection \citep{goldstein07}.

\begin{figure}
  \centering
  \setlength{\unitlength}{10mm}
  \begin{picture}(6,6)(1,2)
    \put(3,3){\circle*{0.15}}\put(2,3){\makebox(0,0){$(x,y)$}}
    \put(5,3){\circle*{0.15}}\put(6,3){\makebox(0,0){$(x',y)$}}
    \put(5,5){\circle*{0.15}}\put(6,5){\makebox(0,0){$(x',y')$}}
  \end{picture}
  \caption{$\Cov \{ F(x, y), F(x', y') \mathbin{\vert} F(x', y) \} = 0$ when
    the covariance function is separable.}
  \label{fig:Cov}
\end{figure}

\citet{cressie99} consider the implications of the separability of the
covariance function when $F$ is a spatial-temporal process.  In general,
separability of the covariance function implies that
\begin{equation}
  \label{eq:cressie}
  \Corr \{ F(x, y), F(x, y') \}
  = \frac{ \kappa_y(y, y') }{ \sqrt{ \kappa_y(y, y) \, \kappa_y(y', y') } }
\end{equation}
for all values of $x$.  In other words, if $x$ represents location and
$y$ represents time, then the temporal correlation structure cannot
vary spatially.  \citeauthor{cressie99} conclude that this separable
covariance function ``does not model space-time interaction''
(p.~1331).  A general concern about the absence of interaction has
lead to substantial effort being devoted to developing flexible and
parametric stochastic processes with non-separable covariance
functions \citep[see,
e.g.,][]{cressie99,iaco02,gneiting02,stein05,kent11}.

Note, to avoid confusion, that this type of interaction is different
from that modelled in decompositions of the type
\begin{equation}
  \label{eq:anova}
  f(x, y) = \alpha_0 + \alpha_1(x) + \alpha_2(y) + \alpha_{12}(x, y)
\end{equation}
where $f$ is a deterministic function \citep[see, e.g.,][]{owen97}.
Here $\alpha_{12}(x, y)$ would be the interaction term.  But $F$ is a
stochastic process, \emph{not} a deterministic function.  If $f$ is a
realisation of $F$ then it will almost certainly have an $\alpha_{12}$
term.  When talking of interactions in the stochastic process $F$, we
need to refer to the properties of the distribution of $F$.  Hence, if
the focus is on second-order properties, we must consider interactions
in terms of the properties of the covariance and correlation
functions.  So, `no interactions in $F$' means that the correlation
function of $F$ is invariant to the value of $x$ when considered along
$y$ (and \textit{vice versa}).


\section{Representation theorem}
\label{sec:result}

These preliminaries are from \citet{loeve60}, chapter~10. Consider the
set of all real random quantities with finite second moments, denoted
$F, F', \dots$.  Identify each random quantity with its equivalence
class, where two random quantities are equivalent if they are
identical, or differ only on a set of measure zero.  These equivalence
classes represent points in a Hilbert space, with inner product
$\ip{F, F'} = \E(F F')$.  For simplicity, and without loss of
generality, consider all random quantities to be centred, so that the
inner product represents the covariance, and orthogonal random
quantities are uncorrelated.  In this case the Hilbert space has norm
and distance
\begin{displaymath}
  \norm{ F } = \Sd(F) \quad \text{and} \quad d(F, F') = \Sd( F - F' ) ,
\end{displaymath}
where `$\Sd$' denotes `standard deviation'.  (These are just for
orientation, they are not used in what follows.)  Convergence in this
Hilbert space is equivalent to convergence in quadratic mean, written
here as
\begin{equation}
  F^{(n)} \qm F \iff \E\{ (F^{(n)} - F)^2 \} \to 0 .
\end{equation}

Now within this Hilbert space consider a family of random quantities
indexed by the tuple $(x, y) \in \curlyX \times \curlyY$, where
$\curlyX$ and $\curlyY$ are both closed and bounded intervals of the
real line.  This family is termed a \emph{stochastic process}.  The
covariance function of this stochastic process is
\begin{equation}
  \label{eq:kappa1}
  \kappa\{ (x, y), (x', y') \} = \ip{ F(x, y), F(x', y') } = \E\{ F(x, y) F(x', y') \}
\end{equation}
where, necessarily, $\kappa$ is symmetric and non-negative definite.
This paper investigates the consequence of this covariance function
having the separable form given in \eqref{eq:kappasep}, where,
necessarily, both $\kappa_x$ and $\kappa_y$ are symmetric and
non-negative definite.

Consider the sequence of stochastic processes indexed by $n$,
\begin{equation}
  \label{eq:Fn}
  F^{(n)}(x, y) = \sum_{i=1}^n \sum_{j=1}^n Z_{ij} \, g_i(x) h_j(y)
\end{equation}
where the $\{ Z_{ij} \}$ are orthonormal, i.e.\ $\E(Z_{ij}) = 0$, $\E(
Z_{ij} Z_{i'j'}) = \delta_{ii'} \delta_{jj'}$ ($\delta$ is the
Kronecker delta), and where the functions in $\{ g_i \}$ and $\{ h_j
\}$ are continuous on $\curlyX$ and $\curlyY$, respectively.  While
there are no restrictions on $\{ g_i \}$ and $\{ h_j \}$ beyond
continuity, there is no loss of generality in removing obvious
redundancies.  Therefore we may assume that the functions are mutually
scaled so that $\Vert g_1 \Vert^2 \ldef \int g_1(x)^2 \, \rd x = 1$,
although in fact this property is not used below.  Also, we could
remove functions that are identically zero, but keeping them in allows
us to use just one limit for both $i$ and $j$, simplifying the
notation slightly.

The following two propositions together establish the equivalence
between \eqref{eq:Fn} and separability of the covariance function of
$F$.

\begin{proposition}\label{thm:QM}\samepage
  If $n$ is finite or $F^{(n)} \qm F$ uniformly on $\curlyX \times
  \curlyY$ then $F$ has a continuous separable covariance function.
\end{proposition}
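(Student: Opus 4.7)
I would split the argument into the two cases flagged by the statement. For \emph{finite} $n$, the covariance of $F^{(n)}$ is computed directly from \eqref{eq:Fn} using orthonormality of the $\{Z_{ij}\}$:
\begin{equation*}
\kappa^{(n)}\{(x,y),(x',y')\} = \sum_{i=1}^n g_i(x)g_i(x') \cdot \sum_{j=1}^n h_j(y)h_j(y'),
\end{equation*}
which is manifestly separable, and continuous on the compact rectangle $\curlyX\times\curlyY$ because the $g_i$ and $h_j$ are continuous there by hypothesis. This disposes of the finite case.

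For the \emph{infinite} case, the first step is to pass the convergence $F^{(n)}\qm F$ up to the covariance function. By Cauchy--Schwarz in the Hilbert space,
\begin{equation*}
\bigl|\kappa^{(n)}\{(x,y),(x',y')\}-\kappa\{(x,y),(x',y')\}\bigr|
\le \|F^{(n)}(x,y)-F(x,y)\|\cdot\|F^{(n)}(x',y')\| + \|F(x,y)\|\cdot\|F^{(n)}(x',y')-F(x',y')\|.
\end{equation*}
Uniform quadratic-mean convergence gives $\|F^{(n)}-F\|\to 0$ uniformly on $\curlyX\times\curlyY$, and it also forces $\|F^{(n)}\|$ and $\|F\|$ to be uniformly bounded (for large $n$, $\|F^{(n)}\|$ is close to $\|F^{(N_0)}\|$, which is continuous on a compact set, hence bounded). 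So $\kappa^{(n)}\to\kappa$ uniformly on $(\curlyX\times\curlyY)^2$. Continuity of $\kappa$ then follows because each $\kappa^{(n)}$ is continuous and a uniform limit of continuous functions is continuous.

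The step I expect to be the main obstacle is deducing \emph{separability} of the limit $\kappa$ from the fact that the product $\kappa_x^{(n)}(x,x')\kappa_y^{(n)}(y,y')$ converges; the individual factors need not converge, and either could drift to $0$ or $\infty$ while the product remains well-behaved. I would handle this by renormalising against an anchor point. The trivial case $\kappa\equiv 0$ is obviously separable, so assume there exists $(x_0,y_0)$ with $L := \kappa\{(x_0,y_0),(x_0,y_0)\} > 0$. Setting $a_n := \kappa_y^{(n)}(y_0,y_0)\ge 0$ and $b_n := \kappa_x^{(n)}(x_0,x_0)\ge 0$, the convergence at this anchor point gives $a_n b_n\to L>0$, so both sequences are eventually bounded away from $0$ and from $\infty$. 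Dividing through, $\kappa_x^{(n)}(x,x')/b_n\to\kappa\{(x,y_0),(x',y_0)\}/L$ and $\kappa_y^{(n)}(y,y')/a_n\to\kappa\{(x_0,y),(x_0,y')\}/L$, uniformly. Multiplying these two limits together and restoring the factor $a_n b_n\to L$ gives
\begin{equation*}
\kappa\{(x,y),(x',y')\} = \frac{\kappa\{(x,y_0),(x',y_0)\}\,\kappa\{(x_0,y),(x_0,y')\}}{L},
\end{equation*}
which exhibits $\kappa$ explicitly as a product of a function of $(x,x')$ and a function of $(y,y')$, as required.
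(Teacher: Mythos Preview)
Your argument is correct and follows the same overall line as the paper: compute $\kappa^{(n)}$ from orthonormality, factorise it as $\kappa_x^{(n)}\kappa_y^{(n)}$, and then pass to the uniform limit to get continuity and separability of $\kappa$. Where you differ is in rigor at the separability step. The paper simply asserts that ``the separability of $\kappa$ follows immediately'' from $\kappa=\lim_n \kappa_x^{(n)}\kappa_y^{(n)}$, whereas you rightly flag that a limit of factorised functions need not factorise, and supply the anchor-point renormalisation to exhibit $\kappa$ explicitly as a product. That extra step is a genuine improvement. One small point: your claim that $a_n$ and $b_n$ are individually bounded away from $0$ and $\infty$ does not follow from $a_nb_n\to L>0$ alone (think $a_n=n$, $b_n=1/n$); here it is rescued by the fact that $a_n=\sum_{j\le n}h_j(y_0)^2$ and $b_n=\sum_{i\le n}g_i(x_0)^2$ are non-decreasing, but in any case your division argument only really uses $a_nb_n\to L>0$, so the conclusion stands.
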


\begin{proof}
  Only the $n \to \infty$ result needs to be proved; $n$ finite is a
  special case.  The convergence of $F^{(n)}(x, y)$ to $F(x, y)$ for each
  $(x, y)$ implies the pointwise convergence of the covariance
  functions; this is a standard continuity property of Hilbert spaces
  \citep[see, e.g.,][Lemma~3.2-2]{kreyszig78}.  Thus
  \begin{equation}
    \kappa \{ (x, y), (x', y') \} = \lim_{n\to\infty} \kappa^{(n)} \{ (x, y), (x', y') \}
  \end{equation}
  for each $(x, y)$ and $(x', y')$, where
  \begin{equation}
    \label{eq:kappan}
    \begin{split}
      \kappa^{(n)}  \{ (x, y), (x', y') \}
      & = \ip{ \boost F^{(n)}(x, y), F^{(n)}(x', y') } \\
      & = \sum_{i=1}^n \sum_{j=1}^n g_i(x) h_j(y) g_i(x') h_j(y') \\
      & = \sum_{i=1}^n g_i(x) g_i(x') \, \sum_{j=1}^n h_j(y) h_j(y') \\
      & = \kappa_x^{(n)}(x, x') \, \kappa_y^{(n)}(y, y') ,
    \end{split}
  \end{equation}
  say, where the second line follows from the orthonormality of the
  $\{ Z_{ij} \}$, and the functions $\kappa_x^{(n)}$ and
  $\kappa_y^{(n)}$ in the final line are clearly symmetric and
  non-negative definite (this proves the $n$ finite case).  The
  separability of $\kappa$ follows immediately.

  For continuity, $\kappa^{(n)}$ is uniformly convergent, because all
  random quantities have finite second moments and $F^{(n)}$ is uniformly
  convergent.  As $\kappa^{(n)}$ is continuous, uniform convergence
  implies that the limit $\kappa$ is continuous.
\end{proof}

Note that $\{ Z_{ij} \}$ must be uncorrelated, but the components do
not have to be standardised.  However, if the variance of $Z_{ij}$
depends on $(i,j)$, then it must factorise as $\lambda_i \gamma_j$ in
order for $F$ to have a separable covariance function; but in that
case the terms in the variance can be absorbed into $\{ g_i \}$ and
$\{ h_j \}$.

The second proposition asserts the converse.

\begin{proposition}\label{thm:KL}\samepage
  If $F$ has a continuous separable covariance function, then there
  exist sets of continuous functions $\{g_i\}$ and $\{h_j\}$ in
  \eqref{eq:Fn} such that $F^{(n)} \qm F$ uniformly on $\curlyX \times
  \curlyY$.
\end{proposition}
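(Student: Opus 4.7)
The plan is to combine Mercer's theorem with a Karhunen--Lo\`eve-type construction, using the separable structure of $\kappa$ to force the joint eigensystem into tensor-product form. Since $\curlyX$ and $\curlyY$ are closed bounded intervals they are compact; excluding the trivial case $\kappa\equiv 0$, the factors $\kappa_x,\kappa_y$ may be chosen continuous as well as symmetric and non-negative definite (by absorbing constants and exploiting the fact that each factor is a scalar multiple of a slice of the continuous function $\kappa$). Mercer's theorem then yields orthonormal systems $\{\phi_i\}\subset L^2(\curlyX)$ and $\{\psi_j\}\subset L^2(\curlyY)$ with continuous eigenfunctions and non-negative eigenvalues $\lambda_i,\mu_j$, such that
\begin{equation*}
\kappa_x(x,x')=\sum_i\lambda_i\phi_i(x)\phi_i(x'),\qquad \kappa_y(y,y')=\sum_j\mu_j\psi_j(y)\psi_j(y'),
\end{equation*}
both series converging absolutely and uniformly; multiplying and invoking separability gives a doubly indexed uniformly convergent expansion of $\kappa$ with coefficients $\lambda_i\mu_j$.

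Next I would set $g_i(x)\ldef\sqrt{\lambda_i}\phi_i(x)$ and $h_j(y)\ldef\sqrt{\mu_j}\psi_j(y)$, which are continuous, and construct the coefficients by quadratic-mean integration. Continuity of $\kappa$ makes $F$ mean-square continuous on the compact domain, so the q.m.\ double Riemann integrals
\begin{equation*}
W_{ij}\ldef\int_{\curlyX}\!\!\int_{\curlyY}F(x,y)\,\phi_i(x)\psi_j(y)\,\rd x\,\rd y
\end{equation*}
exist. A Fubini-type interchange of expectation and integration, together with the Mercer eigenrelation $\int\kappa_x(x,x')\phi_{i'}(x')\,\rd x'=\lambda_{i'}\phi_{i'}(x)$ and its $y$-analogue, delivers $\E(W_{ij}W_{i'j'})=\lambda_i\mu_j\,\delta_{ii'}\delta_{jj'}$ and $\E\{F(x,y)\,W_{ij}\}=\lambda_i\mu_j\phi_i(x)\psi_j(y)$. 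Setting $Z_{ij}\ldef W_{ij}/\sqrt{\lambda_i\mu_j}$ when $\lambda_i\mu_j>0$ (and taking $Z_{ij}$ to be any independent orthonormal quantity otherwise, since those terms do not enter $F^{(n)}$) produces an orthonormal family as required.

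For uniform q.m.\ convergence, expanding the squared distance and substituting the two expectation identities gives
\begin{equation*}
\E\bigl\{(F(x,y)-F^{(n)}(x,y))^2\bigr\}=\kappa\{(x,y),(x,y)\}-\sum_{i,j\le n}\lambda_i\mu_j\,\phi_i(x)^2\psi_j(y)^2,
\end{equation*}
which is precisely the tail of the Mercer expansion of $\kappa$ evaluated on the diagonal, and hence tends to zero uniformly in $(x,y)$ by the first step.

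The main obstacle I anticipate is the Fubini-type justification in the second step: confirming the existence of the q.m.\ double integrals $W_{ij}$ and the legitimacy of swapping expectation with those integrals against continuous kernels on a compact domain. Once that machinery is in place, the orthonormality of $\{Z_{ij}\}$ and the uniform vanishing of the Mercer tail both reduce to direct computation, and the tensor-product structure forced by separability does the rest.
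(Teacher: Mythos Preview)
Your proposal is correct and follows essentially the same route as the paper: apply Mercer's theorem separately to $\kappa_x$ and $\kappa_y$, take the tensor-product eigensystem for $\kappa$, define the coefficients $W_{ij}$ (the paper's $Z'_{ij}$) as q.m.\ integrals of $F$ against the product eigenfunctions, and read off uniform q.m.\ convergence from the Karhunen--Lo\`eve expansion. Where the paper simply invokes the KL theorem as a black box (citing Ash), you have unpacked the mechanism by computing the residual norm explicitly as the Mercer tail and flagging the Fubini-type justification---this is more detailed but not a different argument.
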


\begin{proof}
  This follows from an application of Mercer's Theorem and the
  Karhunen-Lo\`{e}ve expansion; these are both derived in
  \citet[Appendix]{ash65}.

  Mercer's Theorem states that if $\kappa_x$ is continuous on $\curlyX
  \times \curlyX$ then
  \begin{equation}
    \label{eq:Mercer}
    \kappa_x(x, x') = \lim_{n \to \infty} \sum_{i=1}^n \lambda_i\, \psi_i(x) \psi_i(x')
  \end{equation}
  where $\{ \lambda_i \}$ are the positive eigenvalues of $\kappa_x$
  and $\{ \psi_i(x) \}$ are the corresponding eigenfunctions, which
  are continuous on $\curlyX$; the convergence is absolute and uniform
  on $\curlyX \times \curlyX$.  Similarly, for $\kappa_y$,
  \begin{equation}
    \kappa_y(y, y') = \lim_{n \to \infty} \sum_{j=1}^n \gamma_j\, \phi_j(y) \phi_j(y') .
  \end{equation}
  If the covariance function $\kappa$ is separable, then, in obvious
  notation,
  \begin{equation}
    \label{eq:eigen}
    \begin{split}
    \kappa \{ (x, y), (x', y') \}
    & = \kappa_x(x, x') \, \kappa_y(y, y') \\
    & = \big( \lim_{n \to \infty} \kappa_x^{(n)}(x, x') \big) \big( \lim_{n \to \infty} \kappa_y^{(n)}(y, y') \big) \\      
    & = \lim_{n \to \infty} \big( \kappa_x^{(n)}(x, x') \, \kappa_y^{(n)}(y, y') \big) \\
    & = \lim_{n \to \infty} \sum_{i=1}^n \sum_{j=1}^n \lambda_i\, \gamma_j\, \psi_i(x) \phi_j(y) \psi_i(x') \phi_j(y') .
    \end{split}
  \end{equation}
  The series for $\kappa$ is absolutely convergent because both
  $\kappa_x$ and $\kappa_y$ are absolutely convergent (according to
  Mercer's Theorem).  The series is uniformly convergent because both
  $\kappa_x$ and $\kappa_y$ are uniformly convergent (according to
  Mercer's Theorem) and bounded.

  It is easy to verify that, for every $i$ and $j$, $\lambda_i
  \gamma_j$ is a positive eigenvalue for $\kappa$, and $\psi_i(x)
  \phi_j(y)$ a corresponding eigenfunction.  Thus we can apply the
  Karhunen-Lo\`{e}ve (KL) expansion.  Therefore, define
  \begin{equation}
    \label{eq:2}
    Z'_{ij} \ldef \iint_{\curlyX \times \curlyY} F(x, y) \psi_i(x) \phi_j(y) \, \rd x \, \rd y
  \end{equation}
  for which $\E( Z'_{ij} Z'_{i'j'}) = \lambda_i \gamma_j \delta_{ii'}
  \delta_{jj'}$.  It follows that if
  \begin{equation}
    \label{eq:Fn1}
    F^{(n)}(x, y) \ldef \sum_{i=1}^n \sum_{j=1}^n Z'_{ij} \, \psi_i(x) \phi_j(y) , 
  \end{equation}
  then $F^{(n)} \qm F$ uniformly on $\curlyX \times \curlyY$.
  Eq.~\eqref{eq:Fn1} has the required form, with $Z_{ij} \ldef Z'_{ij}
  / \sqrt{\lambda_i \gamma_j}$, $g_i(x) \ldef \sqrt{\lambda_i}
  \psi_i(x)$, and $h_j(y) \ldef \sqrt{\gamma_j} \phi_j(y)$.
\end{proof}

Putting these two Propositions together, we can conclude the
following.

\begin{proposition}[Representation theorem]\label{thm:summary}
  $F$ has continuous separable covariance function \emph{if and only
    if} it can be represented as $F^{(n)}$ in \eqref{eq:Fn}, or as its
  limit when $n \to \infty$.
\end{proposition}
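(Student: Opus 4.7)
The plan is essentially bookkeeping: Proposition~\ref{thm:summary} is simply the conjunction of the two propositions just established, so the proof amounts to matching up the two directions of the biconditional with the appropriate implication.

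First I would handle the ``if'' direction. Suppose $F$ can be written as $F^{(n)}$ in \eqref{eq:Fn} for some finite $n$, or as the quadratic-mean uniform limit of such a sequence. Then Proposition~\ref{thm:QM} directly yields that $F$ has a continuous separable covariance function. No extra work is needed here; the finite and infinite cases are both covered by the statement of Proposition~\ref{thm:QM}.

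Next I would handle the ``only if'' direction. Suppose $F$ has a continuous separable covariance function. Then Proposition~\ref{thm:KL} guarantees the existence of continuous families $\{g_i\}$ and $\{h_j\}$ (built from the Mercer eigenfunctions of $\kappa_x$ and $\kappa_y$, scaled by the square roots of the eigenvalues) and an orthonormal family $\{Z_{ij}\}$ such that the partial sums \eqref{eq:Fn} converge to $F$ in quadratic mean, uniformly on $\curlyX \times \curlyY$. This gives exactly the representation claimed.

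There is no genuine obstacle: the only thing to verify is that the two propositions share a common notion of the representation \eqref{eq:Fn}, which they do by construction, and that the ``$n$ finite or limit as $n\to\infty$'' clause in Proposition~\ref{thm:summary} lines up with the hypotheses of Proposition~\ref{thm:QM}. I would close by remarking that the representation is of course not unique, since any orthogonal reshuffling of the $\{Z_{ij}\}$ absorbed into the $\{g_i\}$ and $\{h_j\}$ yields the same process in second-order terms.
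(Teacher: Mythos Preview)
Your proposal is correct and matches the paper's own treatment: the paper does not give a separate proof of Proposition~\ref{thm:summary} at all, simply stating that it follows by putting Propositions~\ref{thm:QM} and~\ref{thm:KL} together, which is exactly what you do. Your closing remark on non-uniqueness is extra but harmless.
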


This result is straightforward to derive in the special case where
both $\curlyX$ and $\curlyY$ are finite, and $F$ is Gaussian.  The
Hilbert space approach used here is necessary to lift these two
restrictions.  In the case where $F$ is a Gaussian process, the $\{
Z_{ij} \}$ are \emph{independent} standard Gaussian quantities, and
the convergence of $F^{(n)}$ to $F$ at each $(x, y)$ is almost sure; see,
\citet[p.~485]{loeve60}, or \citet[p.~279]{ash65}.


\medskip
\textbf{Generalisations.}  Two generalisations are immediate.  First,
the result is a special case of a more general result for complex $F$,
for which the inner product is $\ip{ F, F' } = \E( F \bar{F}' )$,
where $\bar{F}'$ is the complex conjugate of $F'$.  It is the complex
case that is treated in \citet[ch.~10]{loeve60}.  Second, the result
extends to any domain of $F$ with a finite number of dimensions, as
can be seen by inspecting the two proofs.  To apply directly the
results, the domain must be the product of closed and bounded
intervals of the real line.  However, more general versions of
Mercer's Theorem are available; see, e.g., \citet{ferreira09}.

\section{Products of processes}
\label{sec:products}

This section considers the special case in which $F$ can be written as
the product of two stochastic processes, one in $x$ and one in $y$:
\begin{equation}
  F(x, y) = F_x(x) \, F_y(y) .
\end{equation}
First, though, it is necessary to digress briefly on independence and
`uncorrelation', where this neologism (which is not original) is
shorter and also more direct than `lack of correlation'.

\subsection{Probabilistic independence and uncorrelation}
\label{sec:independence}

Consider two families of random quantities, $\{ X_i \}$ and $\{ Y_j
\}$.  Following \citet[ch.~4, sec.~3]{whittle00}, we say that these
two families are probabilistically independent if
\begin{equation}
  \label{eq:independent}
  \E \big[ g( \{ X_i \} ) \times h( \{ Y_j \}) \big]
  = \E\big[ g( \{ X_i\} ) \big] \times \E \big[  h( \{ Y_j\} ) \big]
\end{equation}
for all scalar functions $g$ and $h$ for which the righthand product
is defined.  This property is far too strong (i.e.\ restrictive) for
results that concern second-order properties such as covariances.
But, as shown below, simple uncorrelation is too weak.  Therefore
consider an indexed sequence of properties that runs from one to the
other.

\begin{definition}
  Two families of random quantities $\{ X_i \}$ and $\{ Y_j \}$ are
  \emph{$k$th-order uncorrelated} if
  \begin{displaymath}
    \E \Big( \prod_i X_i^{a_i} \times \prod_j Y_j^{b_j} \Big) = \E \Big( \prod_i X_i^{a_i} \Big) \times \E \Big( \prod_j Y_j^{b_j} \Big)
  \end{displaymath}
  for all tuples $\{ a_i \}$ and $\{ b_j \}$ comprising non-negative
  integers whose sum does not exceed $k$.
\end{definition}

If the families are first-order uncorrelated, then every $X_i$ is
uncorrelated with every $Y_j$, but nothing else is implied. At the
other end of the scale, $(k \to \infty)$th-order uncorrelated implies
probabilistic independence, if  $g$ and $h$ are restricted to functions
with well-behaved Taylor Series expansions.
%
%
Therefore,
statements of probabilistic independence are stupendously stronger
that those concerning second-order uncorrelation, noting that the set
of second-degree monomials is a vanishingly small fraction of the set
of all possible functions used in \eqref{eq:independent}.

\subsection{Products of processes}
\label{sec:products1}

Now we return to $F$s that are products of processes. It is a standard
and immediate result that if $F_x$ and $F_y$ are probabilistically
independent then $F$ has a separable covariance function; see, e.g.,
the textbooks of \citet[sec.~2.3]{stein99},
\citet[sec.~2.3]{santner03}, or \citet[sec.~4.2]{rasmussen06}. But in fact
probabilistic independence is far too strong: all that is required for
$F$ to have a separable covariance function is that the stochastic
processes $F_x$ and $F_y$ are second-order uncorrelated, so that
\begin{equation}
  \label{eq:2order}
  \E\{ F(x, y) \times F(x', y') \}
   = \E\{ F_x(x) F_x(x') \} \times \E\{ F_y(y) F_y(y') \} .
\end{equation}

One might imagine that the class of processes with separable
covariance functions contains many processes that cannot be
represented as products of second-order uncorrelated processes. In
general this is correct, but if we consider only the second-order
properties of the process then in fact the two classes are equivalent.

\begin{proposition}\label{thm:SOE}
  Every stochastic process with a separable covariance function is
  second-order identical to the product of second-order uncorrelated
  processes.
\end{proposition}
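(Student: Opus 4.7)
The plan is to lean on the representation theorem (Proposition~\ref{thm:summary}) to write $F$ as a quadratic-mean limit of sums $\sum_{i,j} Z_{ij}\, g_i(x) h_j(y)$, and then to \emph{split} the orthonormal double-indexed family $\{Z_{ij}\}$ as a product of two single-indexed families $U_i V_j$. This converts the double sum into a product of two single sums, one in $x$ and one in $y$, which are the candidate processes $F_x$ and $F_y$.

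In detail, I would take $\{U_i\}$ and $\{V_j\}$ to be two orthonormal families, chosen independent of one another on some auxiliary probability space (for definiteness, two independent i.i.d.\ sequences of standard Gaussians), and define
\begin{equation*}
  F_x(x) \ldef \sum_i U_i\, g_i(x) , \qquad F_y(y) \ldef \sum_j V_j\, h_j(y) ,
\end{equation*}
each as a quadratic-mean limit; the partial sums are Cauchy because $\sum_i g_i(x)^2 = \kappa_x(x,x) < \infty$ and similarly for $h_j$, both by Mercer's Theorem. Independence of the two driving families makes $\{F_x(x)\}$ and $\{F_y(y)\}$ probabilistically independent, and hence in particular second-order uncorrelated. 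A direct computation of $\E\{F_x(x) F_x(x') F_y(y) F_y(y')\}$ factorises via independence into $\kappa_x(x,x')\,\kappa_y(y,y') = \kappa\{(x,y),(x',y')\}$; since the mean of $F_x F_y$ is also zero, the product process has the same first- and second-moment structure as $F$, i.e.\ they are second-order identical.

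I do not expect any real obstacle: the heavy lifting has already been done by the representation theorem, which is precisely what guarantees the existence of the $\{g_i\}$ and $\{h_j\}$ needed to seed the construction. The only point of care is conceptual rather than technical---the $F_x$ and $F_y$ produced in this way live on an auxiliary probability space, so the statement is about the existence of a second-order match, not about decomposing $F$ itself as a product within its native probability space. Given the vast gap stressed in Section~\ref{sec:independence} between probabilistic independence and second-order uncorrelation, matching the second-moment structure alone is the strongest conclusion one could reasonably hope to extract from separability.
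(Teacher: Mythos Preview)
Your proposal is correct and follows essentially the same route as the paper: invoke the representation theorem, replace the orthonormal $\{Z_{ij}\}$ by a product $U_i V_j$ of two single-indexed orthonormal families, and factor the resulting double sum into $F_x(x)\,F_y(y)$. The only cosmetic difference is that you instantiate $\{U_i\}$ and $\{V_j\}$ as independent Gaussian sequences, whereas the paper works abstractly with families that are merely second-order uncorrelated; your stronger choice is of course sufficient, and your remark about the auxiliary probability space anticipates exactly the caveat the paper makes immediately after its proof.
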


\begin{proof}
  It suffices to consider processes indexed by the tuple $(x, y)$, as
  the extension to more than two indices is immediate, so let $F(x,
  y)$ be a stochastic process with a separable covariance function.
  By Proposition~\ref{thm:summary}, $F(x, y)$ can be represented as
  \eqref{eq:Fn}, or its limit as $n \to \infty$.  Now replace each
  $Z_{ij}$ in \eqref{eq:Fn} with $Z_i Z_j'$.  In order to preserve the
  mean and covariance functions, these $\{ Z_i \}$ and $\{ Z_j' \}$
  must satisfy $\E( Z_i Z'_j ) = 0$ and $\E( Z_i Z_{i'} Z_j' Z_{j'}') =
  \delta_{ii'} \delta_{jj'}$.  The natural solution is that $\{ Z_i
  \}$ are orthonormal, $\{ Z_j'\}$ are orthonormal, and $\{ Z_i \}$
  and $\{ Z_j'\}$ are second-order uncorrelated. At this point, the
  original $F^{(n)}$ has been replaced by a new function with the same
  (separable) covariance function. But this new function factorises
  into the product
  \begin{displaymath}
    \Big( \sum_{i=1}^n Z_i \, g_i(x) \Big) \times \Big( \sum_{j=1}^n Z_j' \, h_j(y) \Big)
  \end{displaymath}
  and these two functions are second-order uncorrelated, because $\{
  Z_i \}$ and $\{ Z_j' \}$ are second-order uncorrelated.
\end{proof}

It is very important to appreciate that $\{ Z_{ij} \}$ and $\{ Z_i
Z'_j \}$ do not have the same joint distribution, and so replacing the
$n^2$ terms $\{ Z_{ij} \}$ with the $2n$ terms $\{ Z_i Z'_j \}$
changes the stochastic process to something other than $F$.  But this
new process has the same (zero) mean, and the same (separable)
covariance function, and so it is identical in its second-order
properties.  In general, the step where we replace $\{ Z_{ij} \}$ with
$\{ Z_i Z_j' \}$ shows that there are an infinite number of possible
candidates for $F_x(x) F_y(y)$.

To give an important example of the difference between $F(x, y)$ with
a separable covariance function and its second-order identical
$F_x(x) F_y(y)$, consider the case where $F$ is a Gaussian process.
In this case, as mentioned in Section~\ref{sec:result}, $\{ Z_{ij} \}$
are IID standard Gaussian random quantities. But if $Z_{ij} = Z_i
Z'_j$ then $Z_i$ and $Z'_j$ \emph{cannot} be Gaussian random
quantities, and in this case the implied $F_x$ and $F_y$ are
\emph{not} Gaussian processes, and nor is the product $F_x(x) F_y(y)$.
So a second-order identical process for a Gaussian process with a
separable covariance function is \emph{not} a Gaussian process.  It is
a different stochastic process that just happens to coincide with
$F(x, y)$ in its mean and covariance functions.


Proposition~\ref{thm:SOE} provides the explanation for the strong
constraints implied by a separable covariance function, presented in
Section~\ref{sec:separable}. Both \eqref{eq:ohagan} and
\eqref{eq:cressie} concern second-order properties, and, according to
Proposition~\ref{thm:SOE}, at this level $F$ will behave identically
to the product of second-order uncorrelated processes. When
considering $F(x, y)$ along $y$ at a given $x$, the product form shows
that the only effect of $x$ is to provide an uncertain scaling term
$F_x(x)$, which cancels in the correlation, hence \eqref{eq:cressie}.
The heuristic explanation of \eqref{eq:ohagan} is that under a product
structure for $F$ no information passes along diagonals in $(x, y)$.
This emphasises the point made in section~\ref{sec:separable}, that a
separable covariance function insists on a preferential set of
directions in the input space, aligned with the axes.

Figure~\ref{fig:summary} gives a summary of the results in this paper.

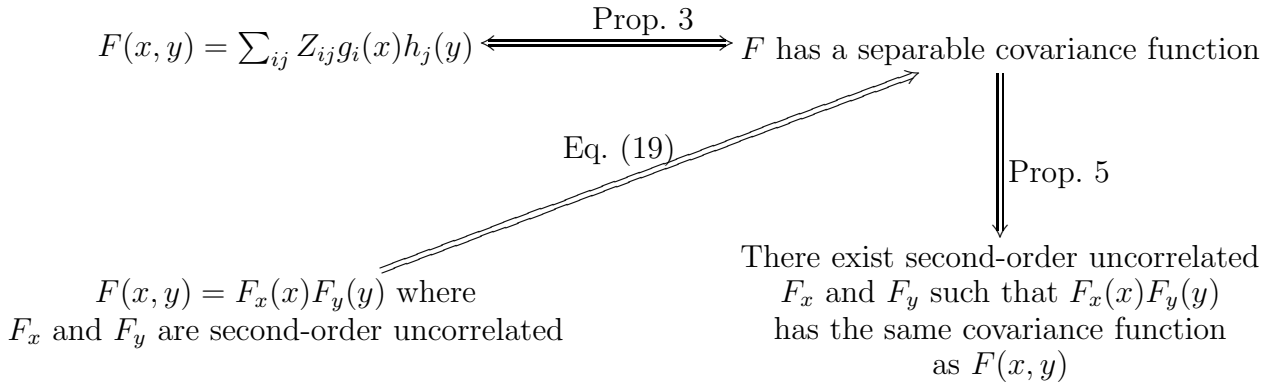
\begin{figure}
  \centering
  \begin{displaymath}
    \begin{array}{c}
      \xymatrix@+=60pt{
        \txt{$F(x, y) = \sum_{ij} Z_{ij} g_i(x) h_j(y)$}
        \ar@{<=>}[r]^{\txt{Prop.~\ref{thm:summary}}}
        & \txt{$F$ has a separable covariance function}
        \ar@{=>}[d]^{\txt{Prop.~\ref{thm:SOE}}}
        \\
        \txt{$F(x, y) = F_x(x) F_y(y)$ where\\$F_x$ and $F_y$ are second-order uncorrelated}
        \ar@{=>}[ur]^{\txt{Eq.~\eqref{eq:2order}}}
        & \txt{There exist second-order uncorrelated\\$F_x$ and $F_y$ such that $F_x(x) F_y(y)$\\has the same covariance function\\as $F(x, y)$} \\
      }
    \end{array}
  \end{displaymath}
  \caption{A summary of the results in this paper.}
  \label{fig:summary}
\end{figure}

\section{Implications for emulators}
\label{sec:emulators}

An emulator is a statistical representation of a function; denote this
function as $f$, assuming, for simplicity, that it is a deterministic
function of two arguments $x$ and $y$.  Typically, $f$ would be a
computer code and $f(x, y)$ would be expensive to run.  An emulator
offers the opportunity to augment the ensemble of runs with additional
judgements, for example about the monotonicity and smoothness of $f$.
In the Bayesian approach to emulation initiated by \citet{currin91},
one proposes a prior stochastic process for $f$, $F$ say, which
represents these additional judgements, and then conditions this
process on the ensemble of runs.

In the Bayesian approach, the prior stochastic process for $f$ is
written as the sum of two uncorrelated components, a set of regression
terms and a `residual':
\begin{equation}
  \label{eq:4}
  F(x, y) = \sum\nolimits_i \beta_i \, r_i(x, y) + E(x, y)
\end{equation}
where $\{ \beta_i \}$ are unknown regression coefficients, $\{ r_i \}$
are specified regressors, and $E(x, y)$ is mean-zero stochastic
process \citep[see, e.g.][ch.~2]{santner03}.  The separability of the
covariance function of $E$ was proposed in the early papers of of
\citet{sacks89} and \citet{currin91}, and is a crucial feature in
screening experiments designed to identify important inputs
\citep{welch92}.  It is now a standard choice, although
\citet{rougier09tiegcm} provide an example where prior information
about $f$ leads to a non-separable covariance function for $E$.  In
multivariate emulation, \citet{rougier08ope} proposed an $E$ which is
separable between inputs and outputs, but not necessarily separable
within the inputs.


\subsection{The role of the regressors}
\label{sec:regressors}

According to the representation theorem, including regressors with
\text{$\Var(\beta_i) > 0$} is sufficient to prevent the covariance
function of $F$ from being separable.  However, conventional wisdom
originating in the experiments of \citet[p.~16]{welch92} suggests that
regression terms beyond a mean effect are not required \citep[see
also][]{steinberg04}.  Furthermore, the mean effect is often estimated
(e.g.\ with its updated mean) and then plugged in.  This leaves us
with a prior emulator with mean zero and a separable covariance
function, or `nearly separable' if $E$ accounts for most of the prior
variance of $F$.

This is where Proposition~\ref{thm:SOE} comes in.  If our judgements
only extend to second-order---and it would be unusual to have
higher-order judgements about a complex computer code---then this is
akin to asserting that, as far as our judgements about the code are
concerned, there exist functions $f_x$ and $f_y$ for which $f(x, y)
\approx f_x(x) f_y(y)$.

Now consider the implications of this.  Were I to believe that there
existed $f_x$ and $f_y$ such that $f(x, y) \approx f_x(y) f_y(y)$ then
I would see little need for multi-parameter perturbations in the
ensemble of training runs.  Instead, for an efficient design I would
fix $x$ at $x_0$, and run the sequence $(x_0, y_1), (x_0, y_2),
\dots$; this would give me an accurate picture of the function $f_y$
up to the multiplicative constant $f_x(x_0)$.  Then I would reverse
the process, fixing $y$ at $y_0$.

But would anyone advocate this kind of experiment for a complex
computer code?  I doubt it: the standard experimental designs are
multi-parameter perturbations such as Latin Hypercube Designs
\citep[LHDs, see, e.g.,][ch.~5]{santner03}.  Now a LHD will perform no
worse than single parameter perturbations in the case where $f(x, y)
\approx f_x(x) f_y(y)$, and would be preferred for robustness.  But
few if any statisticians working in the field of computer experiments
would believe that a LHC would perform \emph{no better} than single
parameter perturbations.  And yet that is what is suggested by a prior
for $f$ with a separable covariance function.

This line of thought sheds some light on the $n = 10p$ rule ($n$ being
the number of runs, and $p$ being the number of inputs), which has
recently been reviewed, investigated, and advocated by \citet[``a
reasonable rule of thumb for an initial experiment'',
p.~374]{loeppky09}.  \textit{A priori}, this seems rather a small
number of runs, especially for more than six inputs (implying more
corners than runs, so that it is impossible for the convex hull of the
ensemble to fill the input space).  And so $n = 10p$ is an interesting
and potentially very useful rule.  However, its linearity in $p$ is
suggestive: this is exactly the kind of rule that would be appropriate
if $f(x, y) \approx f_x(x) f_y(y)$.  The value $10$ sounds about right
to fit a smooth curve for each of $f_x$ and $f_y$.

A close examination of the \citeauthor{loeppky09}\ experiment (their
section~5) reveals that all candidate functions on which this rule was
evaluated were sampled from a Gaussian process with a separable
covariance function.
So this experiment only ever considered the case of functions that
were second-order identical to $f_x(x) f_y(y)$.  We must conclude that
this experiment provides no support for $n = 10p$ in the general case.

\subsection{The effect of conditioning}
\label{sec:conditioning}

Let us put prior judgements aside, in favour of pragmatism.  Ideally,
the separability of the prior covariance function for $F$ would be a
property similar to prior stationarity: a convenient way to specify a
stochastic process with a small number of hyperparameters, with
possibly undesirable properties that are erased by conditioning on one
or more members of the ensemble.  This is in fact the case, as can
easily be seen from the representation theorem.  Conditioning the
prior for $F$ on a value for $f(x, y)$ induces a linear constraint
across the $\{ Z_{ij} \}$ in \eqref{eq:Fn}, and consequently the
components of $\{ Z_{ij} \}$ can no longer be uncorrelated.

Thus the use of a separable or nearly separable prior covariance
function for the emulator is defensible even though we judge that
$f(x, y)$ is much more complicated than $f_x(x) f_y(y)$, in the same
way that the use of a stationary covariance function is defensible
even though we are much more uncertain about $f$ around the edges of
the input space than in the middle (say).

Having said that, my personal view is that we should always include a
reasonable number of regression terms with uncertain coefficients in
the emulator, a point made in \citet{rougier09tiegcm}.  Conditioning
will erase second-order properties of the residual $E$ in and around
the convex hull of the ensemble of runs.  However, away from this
convex hull the updated $E$ will revert gradually to its prior
formulation.  If we can be confident that the ensemble is large enough
to fill the input space, then the prior choices we make for $E$
(stationarity, separability of the covariance function) will not
matter in practice.

But for really large applications, including many environmental
science applications like climate modelling, long run times and large
input spaces can imply that most of the input space is outside the
convex hull of the ensemble.  In this case, an emulator without
regression terms could revert to its prior around the edges of the
input space, but an emulator with regression terms is able to carry
the information in the ensemble all the way to the edges of the input
space.  An updated emulator without regressors would revert to a
separable covariance function. It is not clear to me what the effect
of this would be, e.g.\ in summaries that integrate over the input
space \citep{ooh04}.  But since the representation theorem shows that
separability of the covariance function is a strong constraint on the
structure of $F$, it seems wise not to impose it \textit{a priori}.

\section{Summary}
\label{sec:summary}

Probabilistic inference is extremely demanding, and we often find
ourselves making pragmatic choices where our judgements are only
partial.  This is certainly the case in a fully probabilistic
inference, but it is also true at second-order.  This paper has
examined choices about covariance functions, and, in particular, the
effect of the pragmatic choice to treat the covariance function as
separable---i.e.\ having a product structure.  It is well-known that
such a choice constrains conditional variances and marginal
correlations.  What was not known was the relationship between this
choice and the underlying stochastic process.  This paper has
completely resolved this issue, by providing a representation theorem
for stochastic processes with separable covariance functions.
Briefly, the centred process $F$ has a separable continuous covariance
function \textit{if and only if} it can be represented as $\sum_{ij}
Z_{ij} g_i(x) h_j(y)$ where $\{ Z_{ij} \}$ is a collection of mean
zero, variance one, uncorrelated quantities, and $\{ g_i \}$ and $\{
h_j \}$ are collections of continuous functions.

One use of this representation theorem is to provide a partial
converse to the standard result that if $F(x, y)$ can be represented
as $F_x(x) F_y(y)$, where $F_x$ and $F_y$ are probabilistically
independent, then $F$ has a separable covariance function.  By
substituting $\{ Z_i Z_j' \}$ for $\{ Z_{ij} \}$ in the representation
theorem it was shown that the second-order properties of an $F$ with
separable covariance function can be duplicated by the product of two
uncorrelated processes.  To get the most general statement of the
converse result it was necessary to introduce `$k$-fold uncorrelated'
families of random variables.  This converse result clarifies the
properties of stochastic processes with separable covariance
functions, by envisaging such processes as the product of uncorrelated
processes.  The theoretical results of this paper are summarised in
Figure~\ref{fig:summary}.

The main relevance of these results is in the emulation of complex
computer simulators, part of the statistical field of computer
experiments.  In this application it is completely standard to
represent a large chunk of the prior variance of the emulator in the
form of a stochastic process with a separable covariance function.
Indeed, the conventional wisdom is that the whole prior may be thus
represented.  The results of this paper suggests that this
conventional choice is in fact highly restrictive, being equivalent to
the judgement that the simulator $f(x, y)$ could be approximated by
the product $f_x(x) f_y(y)$.  This has practical implications for
experimental design, and casts doubt upon the provenance of the $n =
10 p$ rule for selecting sample size.  The representation theorem also
shows that it is very easy to construct emulators which do not have
separable covariance functions (by including regression terms with
uncertain coefficients), for which there is no \textit{a priori}
restriction to $f(x, y) \approx f_x(x) f_y(y)$.

These new theoretical results notwithstanding, in many applications
the use of a prior emulator with a separable covariance function is
innocuous and will continue.  This is because updating the emulator
with one or more runs of the computer code will erase the separability
of the covariance function, in the same way that other properties such
as stationary are also erased.  This is demonstrated through the
representation theorem.  The main concern is then for large
experiments, where the ensemble of simulator runs does not fill the
input space, and for which emulators based around a separable
covariance function may revert to their prior at the edges and corners
of the input space (remembering that a high-dimensional space is all
edges and corners).

\section*{Acknowledgements}

The outline of this result was sketched while I was participating in
the Isaac Newton Institute programme \textit{Mathematical and
  Statistical Approaches to Climate Modelling and Prediction},
Aug--Dec 2010. I would like to thank Natalia Bochkina for her help in
part of the proof of Proposition~\ref{thm:KL}, and Marc Genton and
Tony O'Hagan for their comments on a previous and shorter version of
this paper.  I would also like to thank participants of the Isaac
Newton Institute workshop \textit{Accelerating Industrial Productivity
  via Deterministic Computer Experiments and Stochastic Simulation
  Experiments}, part of the programme \textit{Design and Analysis of
  Experiments}, Jul--Dec 2011, for very stimulating discussions.


\end{document}